\newcommand{\I}[1]{ {\bf 1}_{\left\{ #1 \right\}} }
\newcommand{\R}{\mathbb{R}}
\newcommand{\EE}{\mathbb{E}}
\newcommand{\OO}{O}
\newcommand{\PP}{ \mathbb{P}}
\newcommand{\B}[2]{ \mathrm{b} {( #1, \, #2 )} }
\newcommand{\condS}{(S_s)_ {s \in \mathbb{R}_+}}
\newtheorem{theorem}{Theorem}
\newtheorem{lemma}{Lemma}
\newtheorem{proposition}{Proposition}
\newtheorem{definition}{Definition}
\title{Limit distributions of the upper order statistics for the L\'evy-frailty Marshall-Olkin distribution}
\author[1]{Javiera Barrera}
\author[1,2]{Guido Lagos}
\affil[1]{Faculty of Engineering and Sciences, Universidad Adolfo Ib\'a\~nez}
\affil[2]{Industrial Engineering Department, University of Santiago}
\date{\today}
\begin{document}

\maketitle

\begin{abstract}

The Marshall-Olkin (MO) distribution has been considered a key model in reliability theory and in risk analysis, where it is used to model the lifetimes of dependent components or entities of a system and dependency is induced by ``shocks'' that hit one or more components at a time.
Of particular interest is the L\'evy-frailty subfamily of the Marshall-Olkin (LFMO) distribution, since it has few parameters and because the nontrivial dependency structure is driven by an underlying L\'evy subordinator process.
The main contribution of our work is that we derive the precise asymptotic behavior of the upper order statistics of the LFMO distribution.
More specifically, we consider a sequence of $n$ univariate random variables jointly distributed as a multivariate LFMO distribution and analyze the order statistics of the sequence as $n$ grows.
Our main result states that if the underlying L\'evy subordinator is in the normal domain of attraction of a stable distribution with index of stability $\alpha$ then, after certain logarithmic centering and scaling, the upper order statistics converge in distribution to a stable distribution if $\alpha>1$ or a simple transformation of it if $\alpha\leq1$.
Our result is especially useful in network reliability and systemic risk, when modeling the lifetimes of components in a system using the LFMO distribution, as it allows to understand the behavior of systems that rely on its last working components.
Our result can also give easily computable confidence intervals for these components, provided that a proper convergence analysis is carried out first.

\end{abstract}

\textbf{\textit{Keywords---}} Marshall-Olkin distribution; Dependent random variables; Upper order statistics; Extreme-value theory; Reliability

\section{Introduction}

In this paper we explore the upper order statistics of the \emph{L\'evy-frailty} (LFMO) subfamily of the \emph{multivariate Marshall-Olkin (MO) exponential distribution}, also known as the  \emph{conditionally-iid} construction of the MO distribution.
A (general) MO distribution is a positive random vector in $\R^n$ that can be intuitively understood as if each component of the vector represents the ``lifetime'' of a component in an $n$-component system subject to failures as follows.
At time zero, all components are working; there are ``shocks'' that occur after independent exponentially distributed times; each shock hits a subset of components, simultaneously killing all the components in the subset that were not already killed; and killed components stay in that state forever after.

The study of the MO distribution is deeply connected with the copula literature.
Copulas are multivariate random variables whose marginals are uniformly distributed; their popularity relies on Sklar's theorem, that allows to decompose any multivariate distribution with continuous marginals into a unique set of copula and marginal distributions, thus separating the multivariate distribution into its dependency structure ---in the copula--- and its marginal distributions.
From this perspective the MO multivariate distribution is obtained by pairing, using the survival copula version of Sklar's theorem, the \emph{Marshall-Olkin copula} together with exponential marginal distributions; see~\cite[p.~33]{matthias2017simulating} for further details.
The \emph{Marshall-Olkin copula} has remarkable applications in extreme-value theory; see~\cite[p.~119]{matthias2017simulating} and references therein. For instance, the copula is used to capture the dependency structure in rare event settings ---see~\cite{gudendorf2010extreme}--- since it satisfies the extreme-value property for copulas; see~\cite[p.~285ff]{matthias2017simulating}.
More generally both the MO copula and the multivariate distribution are now considered key tools in reliability theory and quantitative risk management; see~\cite{engel2017one,lindskog2003common}.

In turn, the L\'evy-frailty Marshall-Olkin (LFMO) distribution is a particular case of the MO distribution that alleviates the high number of parameters needed to specify a ``crude'' MO distribution.
This is done by imposing exchangeability between the components' lifetimes by means of introducing a L\'evy subordinator process that acts as a latent factor.
Although this distribution was proposed in~\cite{mai2009levy} using a construction that introduces dependence to an initially independent collection of exponential random variables, it was only established afterwards in~\cite{mai2011reparameterizing} that the proposed model corresponds to a subfamily of the MO distribution.
As a result, the LFMO distribution is a flexible and powerful modeling tool that requires few parameters and is easy to simulate as long as the L\'evy subordinator is as well; see~\cite[p.~150]{matthias2017simulating}.

Our interest in the upper order statistics of the LFMO distribution is threefold.
First, we are motivated by the rich combinatorial structure of the order statistics of the LFMO distribution.
As we discuss in \autoref{sec:preliminaries}, the first and average order statistics are relatively easy to address; however, the upper order statistics exhibit a much more complex behavior that is difficult to grasp analytically.
Second, we are also motivated by the use of order statistics in the field of network reliability when describing the probabilistic behavior of a system's lifetime.
In particular, using the so-called \emph{Samaniego signature} result, for a large family of systems one can express the probability of a system being working in terms of the tail probability of the order statistics of the component's failure times; see~\cite{marichal2011signature}.
Third and last, the study of the stochastic behavior of extremal order statistics is a classical topic in statistics and probability theory.
Indeed, these quantities are essential tools in statistical inference, where they have a centuries-old history of applications, see, e.g.,~\cite{david2006topics}; and are also the essential object of study in extreme-value theory; see, e.g.,~\cite{finkenstadt2003extreme}.
All in all, it is important to explore the possible regimes of the order statistics of the components of the LFMO distribution, given its wide application as a model for simultaneous failures of components in systems.

\paragraph{Main contributions}

The main contributions of our work are threefold.

First and foremost, we derive the precise asymptotic behavior of the upper order statistics of the LFMO distribution as the dimension of the space grows.
Indeed, we show that after certain logarithmic centering and scaling, all the upper order statistics converge in distribution to a nondegenerate limit random variable, and we give this limit distribution in an analytical closed-form expression in terms of a stable random variable.
Additionally, our results are particularly convenient for computational purposes: all the required constants are usually easy to compute, the limit random variables are also easy to simulate, and our results give a probabilistic upper bound on the time horizon required to simulate the underlying L\'evy subordinator of the LFMO distribution.
From this perspective, our result is also a contribution to a body of work in applied and computational probability concerned with providing computationally tractable approximations for the analysis of probabilistic tools and models; quintessential examples of this type are heavy-traffic limits in queues, fluid limits and diffusion approximations; see, e.g.,~\cite{asmussen2007stochastic,asmussen2003applied}.

Second, our main result gives important qualitative insight into the effects of choosing the underlying L\'evy subordinator involved in the LFMO distribution.
Indeed, our results hold under mild conditions, namely, that the L\'evy subordinator lies in the \emph{normal domain of attraction} of a stable distribution, and we show that there is a critical behavior change in terms of the index of stability $\alpha \in (0,2]$ of the limit stable random variable.
In detail, and using the perspective of the LFMO distribution as modeling failure times in a system with $n$ components, we show that $\alpha=1$ is a critical value since for $\alpha>1$, the instant when the last component fails grows as $\OO(\log n)$ and concentrates around that value.
This is qualitatively the same behavior as when the lifetimes of the components are iid.
On the other hand, when $\alpha \leq 1$, there is no longer a concentration phenomenon.

Third, in network reliability and systemic risk, our results allows to give estimations for the last failure times of the system's components when using the LFMO distribution to model the failure times of components; and even precise confidence interval can be given if proper convergence of the sequence has been established.
Moreover, our results can potentially lead to give qualitative insight on the probabilistic behavior of more generally defined system failure times when using the \emph{Samaniego signature} decomposition; see, e.g.,~\cite{marichal2011signature}.

\paragraph{Literature review}
The study of extreme values and order statistics has been historically motivated by the study of floods, droughts, fatigue failures and other engineering applications; see~\cite{david2006topics}.
This has become a classical field, and its most relevant theorem is the Fisher-Tipett-Gnedenko result, which gives the general asymptotic behavior for the maximum and minimum of an iid sample.
In the process of understanding the dependence structure of an exchangeable sequence of random variables  a natural step is to study the asymptotic behavior of its extremes values; this was done in~\cite{wuthrich2005limit} for a sequence whose dependency is induced by a mixture model, but until now it has not been done for the MO model. 

Regarding the MO distribution, its study has been driven mostly by its application to reliability and systemic risk modeling, especially in the last decade or so; see~\cite{cherubini2015marshall}.
This model was first proposed by~\cite{Marshall1967} as a means to generalize the lack of memory property to two-dimensional distributions, and it was later generalized to any finite dimension.
It is precisely the memoryless property that has popularized this distribution and its survival copula, as it allows efficient simulation techniques; see, e.g.,~\cite[Ch.~3]{matthias2017simulating} and~\cite{Botev2015} for rare event simulation.
Nonetheless, an important shortfall of the general MO model is that it requires to specify a number of parameters that is exponential in the number of components when no further assumptions or simplifications are made.
In some real-world situations, the nature of the problem can lead to natural simplifications that reduce the number of parameters, e.g.,~in portfolio-credit risks or in insurance; see~\cite{bernhart2015survey}.
In this line,~\cite{MatusIEEE} propose a Lasso selection model to obtain the parameters.
Another way to address the explosive number of parameters is to impose exchangeability of the components' lifetimes, as is the case of the LFMO model proposed by~\cite{mai2009levy} and that we will discuss in detail in~\autoref{sec:preliminaries}.

\paragraph{Notation}
Throughout the paper, we will use the following notation.
Given a collection of random variables $(\xi_n)_n$ and $\xi_\infty$, we write $\xi_n \Rightarrow \xi_\infty$ when the sequence $(\xi_n)_n$ converges in distribution to $\xi_\infty$, i.e., the distributions of the random variables $(\xi_n)_n$ converge weakly to the distribution of $\xi_\infty$.
Additionally, for a real random variable $\xi$, we denote by $\EE \xi$ its expected value and $\text{Var}(\xi)$ its variance.
As usual, we denote by $\R_+$ the nonnegative real numbers.
In addition, for two functions $f$ and $g$, we write $f(x) \sim g(x)$ when $\lim_{x \to +\infty} f(x)/g(x) = 1$; we write $f(x) = o \left( g(x) \right)$ when $\lim_{x \to +\infty} f(x)/g(x) = 0$; and we write $f(x) = \OO \left( g(x) \right)$ when  $\limsup_{x \to +\infty} f(x)/g(x) < +\infty$.
Lastly, for a set $A$ we denote as $\I{x \in A}$ the function in $x$ that is equal to one when $x \in A$ and zero otherwise.

\bigskip 

\paragraph{Organization of this paper}
In~\autoref{sec:preliminaries} we give the preliminary concepts that contextualize our work.
In~\autoref{sec:MainR}, we state our main result regarding the asymptotic behavior of the upper order statistics of a multivariate LFMO distribution.
Finally, in~\autoref{sec:exp}, we show simulation results that computationally test the convergence of our results presented in~\autoref{sec:MainR}.

\section{Preliminaries}\label{sec:preliminaries}

In this section we give an overview of the mathematical models and tools we consider in this paper.
For that, in \autoref{subsec:MO} we define the Marshall-Olkin (MO) distribution and the L\'evy-frailty (LFMO) sub-family, and in \autoref{subsec:order stats}, we give a broad analysis of the order statistics of the LFMO distribution. 
For more details about the applications and properties of the MO distribution we refer the reader to~\cite{cherubini2015marshall} and~\cite[Ch.~3]{matthias2017simulating}.

\subsection{Marshall-Olkin and L\'evy-frailty Marshall-Olkin distributions}\label{subsec:MO}

\begin{definition}[Marshall-Olkin distribution]\label{def:MO}
A random vector $T \in \R^n$ is said to have a Marshall-Olkin~(MO) distribution in $\R^n$ if its components $(T_i)_i$ are defined as
$$T_i= \min\{Z_V \ : \ V \subset \{1,\ldots,n \}~,~i \in V\}.$$
where $(Z_V)_V$  is a family of independent exponential random variables with rate $\lambda_V$ for each $V \subset \{1,\ldots,n \}$.
We assume that $\lambda_V \in \R_+$, with the convention that $Z_V=\infty$ if $\lambda_V= 0$, and additionally assume that $\max_{V : i \in V} \lambda_V > 0$ for each $i$.
\end{definition}
An intuitive interpretation of~\autoref{def:MO} consists of thinking of $T_i$ as the lifetime of the component $i$ in a system with components $\{ 1, \ldots, n \}$, each of which can be working or not working.
All components are initially working, and for each subset $V \subseteq \{1,\ldots,n\}$ at time $Z_V$, there is a ``shock'' that hits all components of $V$.
Once a shock hits a working component, it stops working and stays in that state forever.
In this way, the time $T_i$ at which a component $i$ stops working is the first time that a shock hits it.
We remark that all the $T_i$ are marginally exponentially distributed.
Additionally, the assumption that some shocks' rates could be zero is equivalent to assuming that those shocks will never occur, which is used in certain settings for modeling reasons.

\begin{definition}[L\'evy-Frailty Marshall-Olkin distribution]\label{def:LFMO}
A random vector $T$ in $\R^n$ is said to have a L\'evy-frailty Marshall-Olkin~(LFMO) distribution in $\R^n$ if its components $(T_i)_i$ are defined as
\begin{align}\label{eq:Ti}
T_i := \inf \left\{ t \geq 0 \ : \ S_t \geq \varepsilon_i \right\}, \qquad i = 1, \ldots, n,
\end{align}
where $S=(S_t)_{t \geq 0}$ is a L\'evy subordinator with $S_0 = 0$ and $\{ \varepsilon_1, \ldots, \varepsilon_n \}$ is a collection of iid exponential random variables, the ``triggers'',  with unit parameter and which are independent of $S$.
\end{definition}
An intuitive interpretation of~\autoref{def:LFMO} consists again of thinking of $T_i$ as being the lifetime of component $i$ in an $n$-component system in which all components are working at time zero but component $i$ fails the first time that the L\'evy subordinator $S$ upcrosses the trigger $\varepsilon_i$ associated with $i$.
This construction is equivalent in distribution to defining a MO distribution with the following shock rates $(\lambda_{V})_V$:
$$
\lambda_{V}= \sum_{i=0}^{|V|-1} {{|V| -1}\choose{i}} (-1)^i \left[ \psi(n-|V|+i+1) -\psi(n-|V| +i) \right],
 $$ 
for all  $V\subset \{1,\ldots,n\}$, where $\Psi(x) = -\log \EE e^{-x S_1}$ is the Laplace exponent of the L\'evy subordinator $S$; see, e.g.,~\cite{bernhart2015survey}.
Note that the latter parameters $\lambda_V$ only depend on $|V|$, thus in particular making the components \emph{exchangeable}; see \cite{mai2013sampling}.

We remark that the LFMO distribution can be viewed as the result of imposing on the MO distribution the existence of a ``latent'' stochastic variable such that, conditional on the value of it, the random variables $T_1, \ldots, T_n$ are iid.
 It turns out that the L\'evy subordinator $S$ characterizes such a latent variable, and conditional on the value of $S$, we have that $T_1, \ldots, T_n$ are iid with $\PP(T_i > t \, | \, S) = e^{-S_t}$ for all $t \geq 0$.
This outcome inspires the \emph{conditionally-iid} and \emph{L\'evy-frailty} terminology; see, e.g.,~\cite{mai2014multivariate}.
For the proof of the correspondence between the two constructions see Theorem 3.2 in~\cite[ch.~3]{matthias2017simulating}.

We also remark that the MO distribution in $n$ dimensions of~\autoref{def:MO} requires specifying a number of $2^n-1$ parameters $\lambda_V$, whereas the LFMO distribution only requires parameterizing the L\'evy subordinator $S$.
In this sense, the LFMO distribution is a way to reduce the parametric complexity of the MO distribution by means of introducing the latent variable $S$.
In fact, most characteristics of the LFMO model are expressed in terms of $S_1$ and its Laplace exponent $\Psi(x) = -\log \EE e^{-x S_1}$, which have, respectively, rich probabilistic and analytical structures that allow to study and exploit the model in great detail; see, e.g.,~\cite{bernhart2015survey,hering2012moment}.
Finally, it is worth mentioning that~\cite{engel2017one} studies the LFMO construction when the triggers are nonhomogeneous, that is, when the triggers in~\autoref{def:LFMO} are independent and exponential but with different rates.
For further details about the LFMO distribution see~\cite{mai2009levy,matthias2017simulating}.

\subsection{Order statistics of the LFMO distribution}\label{subsec:order stats}
The order statistics of the LFMO distribution are, in general, not easy to perform calculations with, despite having a rich combinatorial structure.
To illustrate this fact, we now give a straightforward result regarding the marginal distribution of the order statistics  of the LFMO model.
It is proved by applying R\'enyi's representation of the order statistics of iid exponential random variables ---see e.g.~\cite[eq.~(11.2)]{nagaraja2006order}--- together with the distribution of the sum of arbitrary independent exponential random variables; see~\cite[Lemma 11.3.1]{nagaraja2006order}.

\begin{proposition}
Let $T$ in $\R^n$ be an LFMO distributed random vector and denote by $T_{m:n}$ the $m$-th increasing order statistic of $T$, i.e., $\{T_{1:n}, \ldots, T_{n:n}\} = \{T_1, \ldots, T_n\}$ and $T_{1:n} \leq \ldots \leq T_{n:n}$.
For all $m=1,\ldots,n$ and $t \geq 0$, it holds that
 \begin{align}\label{eq:Ti tail}
\PP(T_{m:n} > t)
= \sum_{k=n-m+1}^n e^{-\psi(k) t} {n \choose k} {{k-1} \choose {n-m}} (-1)^{k-n+m-1},
 \end{align}
where $\Psi$ is the \emph{Laplace exponent} of $S_1$, i.e., $\Psi(x) = -\log \EE e^{-x S_1}$.
\end{proposition}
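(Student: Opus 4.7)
My approach is to exploit the LFMO construction of~\autoref{def:LFMO} directly. Because the subordinator $S$ is nondecreasing, the ordering of the lifetimes $(T_i)_i$ is the reverse of the ordering of the triggers $(\varepsilon_i)_i$; more precisely,
\[
T_{m:n} = \inf\{t \geq 0 : S_t \geq \varepsilon_{(m)}\},
\]
where $\varepsilon_{(1)} \leq \cdots \leq \varepsilon_{(n)}$ denotes the order statistics of $\varepsilon_1,\ldots,\varepsilon_n$. Therefore $\{T_{m:n} > t\} = \{S_t < \varepsilon_{(m)}\}$ and the task reduces to computing $\PP(S_t < \varepsilon_{(m)})$, where $\varepsilon_{(m)}$ is independent of $S$.

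First, I would obtain a tractable form for the survival function of $\varepsilon_{(m)}$. By R\'enyi's representation, $\varepsilon_{(m)}$ equals in distribution the sum of $m$ independent exponential random variables with the distinct rates $n, n-1, \ldots, n-m+1$. Applying the partial-fraction formula for the survival function of a sum of independent exponentials with distinct rates then gives
\[
\PP(\varepsilon_{(m)} > x) = \sum_{k=n-m+1}^{n} c_k \, e^{-k x},
\qquad
c_k := \prod_{\substack{j=n-m+1 \\ j \neq k}}^{n} \frac{j}{j-k}.
\]
Conditioning on $S_t$ and using the defining identity $\EE e^{-x S_t} = e^{-\psi(x)t}$ of the Laplace exponent of a subordinator yields
\[
\PP(T_{m:n} > t) = \EE\bigl[\PP(\varepsilon_{(m)} > S_t \mid S_t)\bigr] = \sum_{k=n-m+1}^{n} c_k \, e^{-\psi(k) t},
\]
which already has the claimed exponential structure.

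The remaining step is the combinatorial verification that $c_k = \binom{n}{k}\binom{k-1}{n-m}(-1)^{k-n+m-1}$, and this is the only mildly delicate part of the argument. I would split the product defining $c_k$ according to whether $j<k$ or $j>k$: the former contributes exactly $k-n+m-1$ negative factors, producing the sign $(-1)^{k-n+m-1}$; rewriting each of the two remaining positive products as a ratio of factorials and regrouping the factorials then identifies the coefficient as $\binom{n}{k}\binom{k-1}{n-m}$ (using that $(n-m)+(k-n+m-1)=k-1$). No conceptual difficulty is expected here; the main obstacle is simply keeping track of the index ranges when separating the two sub-products.
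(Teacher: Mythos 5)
Your proof is correct and matches the route the paper indicates (R\'enyi's representation for $\varepsilon_{(m)}$, the partial-fraction survival function of a sum of independent exponentials with distinct rates, then averaging over $S_t$ via the Laplace exponent $\EE e^{-k S_t}=e^{-\Psi(k)t}$), and your combinatorial identification of $c_k$ with $\binom{n}{k}\binom{k-1}{n-m}(-1)^{k-n+m-1}$ checks out. One small prose slip worth fixing: since $S$ is nondecreasing, the $T_i$ are ordered the \emph{same} way as the $\varepsilon_i$, not in reverse; the formula $T_{m:n}=\inf\{t\geq 0:S_t\geq\varepsilon_{(m)}\}$ that you actually use is the correct one.
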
 

From~\eqref{eq:Ti tail}, it follows that the first order statistic $T_{1:n}$ takes a simple form, as it is marginally distributed as an exponential random variable with mean $1/\Psi(n)$.
Additionally,~\cite[Lemma 3.3]{fernandez2015mean} gives a simple argument to obtain the limit distribution of the average order statistics.
In contrast, the upper order statistic $T_{n:n}$ is much more complex; for instance, its mean is
\begin{align}\label{eq:mean tail ord}
\EE T_{n:n} = \sum_{k=1}^n  {n \choose k} \frac{(-1)^{k-1}}{\psi(k)}.
\end{align}
Intuitively, though, if $\EE S_1 < +\infty$, then $\EE T_{n:n}$ should asymptotically behave as $(\log n + \gamma) / \EE S_1$, where $\gamma$ is the Euler-Mascheroni constant, since $\EE S_t = t \EE S_1$  and the mean of the maximum of $n$ iid exponential random variables of parameter 1 is $\sum_{k=1}^n 1/k \approx \log n + \gamma$.
On the other hand, if $\EE S_1 = +\infty$ and the L\'evy subordinator is stable ---so in particular $\alpha<1$--- then $\PP(S_{t_n} > (\log n+ \gamma) ) = \PP(S_1 > (\log n+ \gamma)/t_n^{1/\alpha} ) \to  1$ when $t_n$ is such that $t_n/(\log n)^{\alpha} \to +\infty$.
This result suggests that, for large $n$, with probability close to one, the trajectory of the subordinator will climb sufficiently fast that it surpasses all exponential triggers ---thus killing all components--- in an interval of size $o(\log n)$.

In perspective, the previous behaviors of the upper order statistics are not obvious to deduce from equation~\eqref{eq:mean tail ord}.
However, in our main result in the following section, we not only corroborate them but also give the precise asymptotic distribution of all the upper order statistics of the LFMO distribution.

\section{Main result}\label{sec:MainR}

In this section, we give the main result our paper, which shows the asymptotic behavior of the upper order statistics of the LFMO distribution.
We will consider the following two hypotheses.
\begin{description}
	\item[\bf\hypertarget{hypA}{Hypothesis~(A$_\alpha$)}] $\text{Var} (S_1) = +\infty$, i.e., the variance of $S_1$ is infinite, and $\PP(S_1 > t) \sim A / t^\alpha$ as $t \to +\infty$ for some $\alpha \in (0, 2)$ and $A>0$.
	\item[\bf\hypertarget{hypB}{Hypothesis~(B)}] $0 < \text{Var} (S_1) < +\infty$.
\end{description}
Additionally, when working under \hyperlink{hypA}{Hypothesis~(A$_\alpha$)}, we will use the constant $C_\alpha$ defined as
\begin{align}\label{def:C}
C_\alpha
:= \begin{cases}
\frac{1-\alpha}{\Gamma(2-\alpha) \cos(\pi \alpha/2)} & \text{if } \alpha \in (0,1) \cup (1,2) \\
2 / \pi & \text{if } \alpha =1.
\end{cases}
\end{align}
Finally, we will use the parameterization and notation of~\cite[Section 4.5.1]{whitt2002stochastic} for stable random variables.
The following is the main result of the paper.

\begin{theorem}\label{theo1}
Let $T$ be a random vector in $\R^n$ having a L\'evy-frailty Marshall-Olkin (LFMO) distribution in $n$ dimensions with underlying L\'evy subordinator $S$, as in~\autoref{def:LFMO}.
Denote by $T_{m:n}$ the $m$-th increasing order statistic of the components of $T$, i.e., $\{T_{1:n}, \ldots, T_{n:n}\} = \{T_1, \ldots, T_n\}$ and $T_{1:n} \leq \ldots \leq T_{n:n}$.

\begin{enumerate}

\item	Assume that $\EE S_1 < +\infty$ and either \hyperlink{hypA}{Hypothesis~(A$_\alpha$)} holds for some $\alpha \in (1,2)$ or \hyperlink{hypB}{Hypothesis~(B)} holds, in which case we set $\alpha := 2$.
Let $(m_n)_n$ be any integer sequence satisfying $1 \leq m_n \leq n$ for all $n$ and $n-m_n = o\left( (\log n)^{1/\alpha} \right)$ as $n \to +\infty$.
Then, as $n \to +\infty$, we have that
\begin{align}\label{theo1 lim1}
\frac{T_{m_n:n} - \log n / \EE S_1}{\left(\log n \right)^{1 / \alpha} / \EE S_1} \Longrightarrow \Sigma,
\end{align}
where $\Sigma$ is a $\text{Stable}_\alpha (\sigma, -1, 0)$ random variable under \hyperlink{hypA}{Hypothesis~(A$_\alpha$)} and a \linebreak $\text{Normal}(0,\sigma^2)$ random variable under \hyperlink{hypB}{Hypothesis~(B)}, with $\sigma$ defined as
\begin{align}\label{def:sigma}
\sigma := \begin{cases}
\left( \frac{A}{C_\alpha \EE S_1} \right)^{1 / \alpha} & \text{under \hyperlink{hypA}{Hypothesis~(A$_\alpha$)}} \\
\sqrt{\text{Var} (S_1) / \EE S_1} & \text{under \hyperlink{hypB}{Hypothesis~(B)}},
\end{cases}
\end{align}
and $C_\alpha$ defined as in~\eqref{def:C}.

\item Assume that $\EE S_1 = +\infty$ and \hyperlink{hypA}{Hypothesis~(A$_\alpha$)} holds for some $\alpha \in (0,1]$.
Let $(m_n)_n$ be an integer sequence satisfying $1 \leq m_n \leq n$ for all $n$ and $n-m_n = o\left( n^\rho \right)$ as $n \to +\infty$ for all $\rho \in (0,1)$.
Then, as $n \to +\infty$, it holds that
\begin{align}\label{theo1 lim2}
\frac{T_{m_n:n}}{\left( \log n \right)^{1 / \alpha}} \Longrightarrow \frac{1}{\Sigma^{\alpha}},
\end{align}
where $\Sigma$ is a $\text{Stable}_\alpha (\sigma, 1, 0)$ random variable with $\sigma := \left( A/C_\alpha\right)^{1 / \alpha}$ and $C_\alpha$ is as defined in~\eqref{def:C} above.
\end{enumerate}
\end{theorem}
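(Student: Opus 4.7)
The natural starting point is the identity
$$T_{m:n} = \tau(\varepsilon_{m:n}), \qquad \tau(u) := \inf\{t \geq 0 : S_t \geq u\},$$
obtained by noting that $T_{m:n}$ is the first time the subordinator $S$ exceeds the $m$-th order statistic of the independent triggers. This decouples the randomness into two independent pieces: the random threshold $\varepsilon_{m_n:n}$, and the first-passage functional $\tau$ applied to it. My plan is to (i) pin down $\varepsilon_{m_n:n}$ near $\log n$ with an error smaller than the target normalisation, (ii) derive a fluctuation theorem for $\tau(u)$ as $u \to \infty$ under each hypothesis, and (iii) combine the two using Slutsky's theorem together with the independence of $S$ and $\{\varepsilon_i\}$.

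For step (i), R\'enyi's representation gives $\varepsilon_{n:n} \stackrel{d}{=} \sum_{k=1}^n E_k/k$ with iid $E_k \sim \mathrm{Exp}(1)$, so that $\varepsilon_{n:n} - \log n$ converges in distribution to a Gumbel. For $m_n \leq n$, the gap $\varepsilon_{n:n} - \varepsilon_{m_n:n}$ is a sum of $n-m_n$ independent exponentials whose smallest rate is $1$, so this gap is $O_p(\log(n-m_n)+1)$. The size conditions on $n - m_n$ in Parts~1 and 2 are exactly calibrated so that $\varepsilon_{m_n:n} = \log n + o_p(b_n)$, where $b_n$ is the theorem's denominator: in Part~1, $n - m_n = o((\log n)^{1/\alpha})$ yields $\log(n-m_n) = o((\log n)^{1/\alpha})$; in Part~2, $n - m_n = o(n^\rho)$ for all $\rho \in (0,1)$ gives $\log(n-m_n) = o(\log n)$.

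For step (ii), I would invoke the L\'evy CLT for $S$ at deterministic times and invert it to a CLT for $\tau$. Under Hypothesis~(B), $(S_t - t\,\EE S_1)/\sqrt{t} \Rightarrow N(0, \text{Var}(S_1))$; under Hypothesis~(A$_\alpha$) with $\alpha \in (1,2)$, $(S_t - t\,\EE S_1)/t^{1/\alpha} \Rightarrow \Sigma'$ for a totally right-skewed $\alpha$-stable $\Sigma'$ of scale $(A/C_\alpha)^{1/\alpha}$, both being standard consequences of the tail or second-moment assumption on $S_1$. Plugging this into the pointwise identity $\{\tau(u) \leq t\} = \{S_t \geq u\}$ and linearising around $t = u/\EE S_1$ --- valid because $\tau$ is monotone and $S$ exhibits positive macroscopic drift --- produces the Part~1 limit, with a sign flip that yields the $-1$ skewness and the factor $1/(\EE S_1)^{1/\alpha}$ absorbed into $\sigma = (A/(C_\alpha \EE S_1))^{1/\alpha}$. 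Under Hypothesis~(A$_\alpha$) with $\alpha \in (0,1]$ and $\EE S_1 = \infty$, one instead uses $S_t/t^{1/\alpha} \Rightarrow \Sigma$ (totally right-skewed) and applies a self-similar rescaling to the same passage-time identity to obtain the Part~2 limit involving $1/\Sigma^{\alpha}$; the case $\alpha = 1$ requires a slowly-varying centering in the stable CLT.

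Finally, I would patch the pieces together: the independence of $S$ and $\{\varepsilon_i\}$ allows one to condition on $\varepsilon_{m_n:n}$ (or work on a common Skorokhod probability space), so that the deterministic-threshold asymptotics of step (ii) can be evaluated at $u = \varepsilon_{m_n:n}$ and the error of step (i) absorbed via Slutsky's theorem. The principal obstacle is the rigorous inversion in step (ii): $\tau$ is a first-passage functional of a pure-jump subordinator, so the continuous mapping theorem does not apply directly, and one must argue at the level of one-dimensional distribution functions, exploiting the monotonicity of $\tau$ and the locally uniform convergence of the rescaled law of $S_t$. The boundary $\alpha = 1$ is especially delicate, and some care is also needed to verify that the $o_p$ error from replacing $\varepsilon_{m_n:n}$ by $\log n$ propagates correctly through the nonlinear map $\tau$ uniformly in $m_n$.
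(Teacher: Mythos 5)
Your proposal takes a genuinely different route from the paper. The paper conditions on the path $\condS$, writes $\PP(T_{m_n:n}>u\mid\condS)$ as a $\B{n}{e^{-S_u}}$ tail, reparametrizes $e^{-S_u}$ so that this conditional tail becomes a deterministic function of a centered-and-scaled $S_{u_n}$, and then glues a binomial tail lemma (Lemma~\ref{lemma3}) to a stable CLT for $S$ via the extended continuous mapping theorem (Lemma~\ref{theo:cmt}). You instead exploit the pathwise identity $T_{m:n}=\tau(\varepsilon_{m:n})$ with $\tau(u)=\inf\{t\geq 0: S_t\geq u\}$, which separates the randomness into the top order statistic of the triggers and a first-passage functional of $S$, and you analyze the two pieces independently. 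Both routes are valid. Your decomposition is arguably more transparent about why $\log n$ appears (extreme value of exponential triggers) and why the fluctuation is stable (first-passage CLT for $\tau$); it also reveals that the stated sufficient condition $n-m_n=o\bigl((\log n)^{1/\alpha}\bigr)$ can be relaxed to $\log(n-m_n)=o\bigl((\log n)^{1/\alpha}\bigr)$, since by R\'enyi's representation the gap $\varepsilon_{n:n}-\varepsilon_{m_n:n}$ concentrates around $\log(n-m_n)$ with $O_p(1)$ fluctuation --- this is exactly the sharp threshold that the necessity half of the paper's equivalence~\eqref{propfn} actually isolates. The paper's route, in exchange, packages all of the coupling into one application of the extended CMT and avoids proving a separate passage-time inversion lemma. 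You correctly flag the two non-routine points in your route: the inversion from a stable CLT for $S_t$ to a limit theorem for $\tau(u)$ does not follow from the ordinary continuous mapping theorem (the map $\tau$ is not continuous on path space) and must go through the exact duality $\{\tau(u)>t\}=\{S_t<u\}$; and the patching of steps~(i) and~(ii) is not literally Slutsky, because you are evaluating $\tau$ at a random threshold rather than adding or multiplying weakly convergent sequences --- after conditioning on $\varepsilon_{m_n:n}$ (using its independence from $S$) one still needs a uniform-in-argument version of the passage-time limit, which is precisely what an extended CMT in the spirit of Lemma~\ref{theo:cmt} supplies, so the two proofs converge to the same technical device at this point. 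Your caveat on $\alpha=1$ in Part~2 is also well placed: with $\EE S_1=+\infty$ and $\PP(S_1>t)\sim A/t$, the scaling $S_t/t$ does not converge without a slowly varying centering, a subtlety that the paper's own proof of Part~2 does not address.
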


The proof of the previous result is shown in~\autoref{sec:proof}.
The key idea is to exploit the conditionally iid property to write the distribution of the left-hand side of~\eqref{theo1 lim1}, conditional on the path of $S$, as a deterministic function evaluated in a certain ``zoom-out'' centering and scaling of $S$.
We then show that the thus obtained sequence of deterministic functions converges pointwise and that the ``zoom-out'' version of $S$ converges in distribution.
We conclude that both limits hold simultaneously by using a generalized version of the continuous mapping theorem for weak convergence.

\paragraph{Remarks on~\autoref{theo1}}
\begin{enumerate}
	\item First, we note that \hyperlink{hypA}{Hypotheses~(A$_\alpha$)} and~\hyperlink{hypB}{(B)} describe all the non-trivial cases in which the distribution of $S_1$ is in the \emph{normal domain of attraction} of a stable distribution; that is, the cases in which there exists a sequence $(\mu_n)_n$ and a $\rho>0$ such that $(S_n-\mu_n)/n^\rho$ converges in distribution as $n \to +\infty$ to a stable distribution.
	See, e.g.,~\cite[Section 4.5]{whitt2002stochastic} for further details.
	
	\item The case of $\text{Var} (S_1) = 0$ corresponds to the trivial case $S_t = S_1 t$ with $S_1$ a deterministic constant.
	In this case the components $T_i$ are iid distributed exponential random variables with mean $1/S_1$, and it is easily shown that, e.g., $S_1 \left( T_{n:n} - \log n / S_1 \right)$ converges in distribution to a standard $\text{Gumbel}$ distribution as $n \to +\infty$.

	\item When both $\EE S_1 = +\infty$ and \hyperlink{hypA}{Hypothesis~(A$_\alpha$)} hold, then necessarily, $\alpha \in (0,1]$.
On the other hand, when $S_1$ satisfies $\PP(S_1>t) \sim A/t^\alpha$ as $t \to +\infty$ for some positive $A$ and an $\alpha > 2$, then necessarily, $\text{Var}(S_1)<+\infty$, in which case \hyperlink{hypB}{Hypothesis~(B)} holds.

	\item The distribution of the random variable on the right-hand side of~\eqref{theo1 lim2} converges as $\alpha \searrow 0$ to an exponential distribution with mean $1/\sigma^\alpha$; see, e.g.,~\cite{maller2018small}.
Informally then, under the assumptions of part 2.~of \autoref{theo1}, for large $n$ and $\alpha$ close to zero, the distribution of $T_{m_n:n}$ is ``similar'' to an exponential distribution with mean $(\log n)^{1/\alpha}/ \sigma^\alpha$.

	\item Lastly we remark that one obtains a different asymptotic regime if instead one assumes that the components are actually independent instead of dependent --- recall that the components of $T$ are dependent and are all exponentially distributed with mean $1/\Psi(1)$, where $\Psi(1) = -\log \EE e^{-S_1}$.
	Indeed, if the components were instead assumed to be iid with the same marginals as before then now $\Psi(1) \left( T_{n:n} - \log n / \Psi(1) \right)$ converges in distribution, as $n \to +\infty$, to a standard $\text{Gumbel}$ distribution.
		
\end{enumerate}

\paragraph{Extensions.}

We conjecture that the behavior we established in our result may be observed in other multivariate models.
For instance, in the general MO distribution, we believe that if shocks affecting massive subsets of components occur frequently then we may obtain a behavior similar to the case in which $\alpha \in (0,1)$, i.e., as in part~1.~of \autoref{theo1}; and if massive shocks are infrequent then we may obtain a behavior similar to the case in which $\alpha>1$, i.e., as in part~2.~of~\autoref{theo1}.
Nonetheless, in the general MO setting a key challenge is to find the parameter or structure that generalizes the role of the index of stability $\alpha$ in the LFMO case.


Also, recently in~\cite{mai2018extreme} the author uses a construction similar to the LFMO one to obtain a min-stable multivariate exponential distribution that, moreover, is associated with remarkable copulas such as the Galambos and Gumbel copulas.
They obtain this distribution by replacing the L\'evy subordinator by another stochastic process.
Therefore, asymptotic results similar to ours may possibly be obtained by exploiting these similarities.

\bigskip

A direct application of  \autoref{theo1} is to obtain confidence intervals for the upper order statistics $T_{m_n : n}$ when $n$ is considered sufficiently large.
Nonetheless, for a confidence interval to be valid, one should take care of properly establishing that the number of components $n$ is large enough so that convergence has been attained. 
 This motivates the experiments and discussion we carry out in the following section.




\section{Computational experiments}\label{sec:exp}

To empirically test the convergence in~\autoref{theo1}, in this section we show the results of performing a large number of Monte Carlo simulations of the random variables on the left- and right-hand sides of~\eqref{theo1 lim1}.

\begin{figure}[h!]
	\centering

	\begin{subfigure}{0.8\textwidth}
		\centering
		\includegraphics[width=\textwidth]{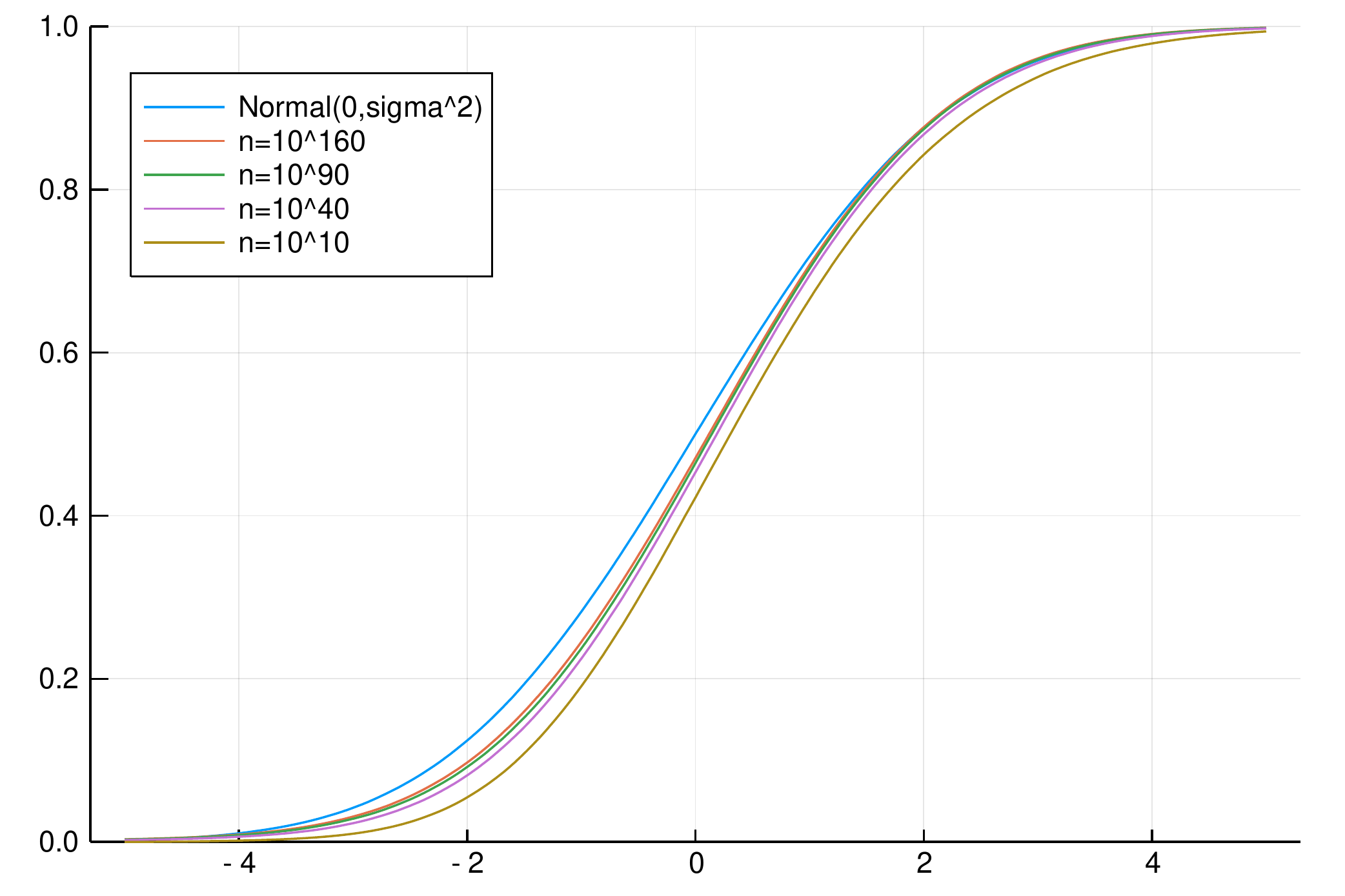}
		\caption{CPP($\lambda=1$) with Pareto($\alpha=2.5$) steps}
		\label{subfig:alpha200}
	\end{subfigure}%
	
	\begin{subfigure}{0.8\textwidth}
		\centering
		\includegraphics[width=\textwidth]{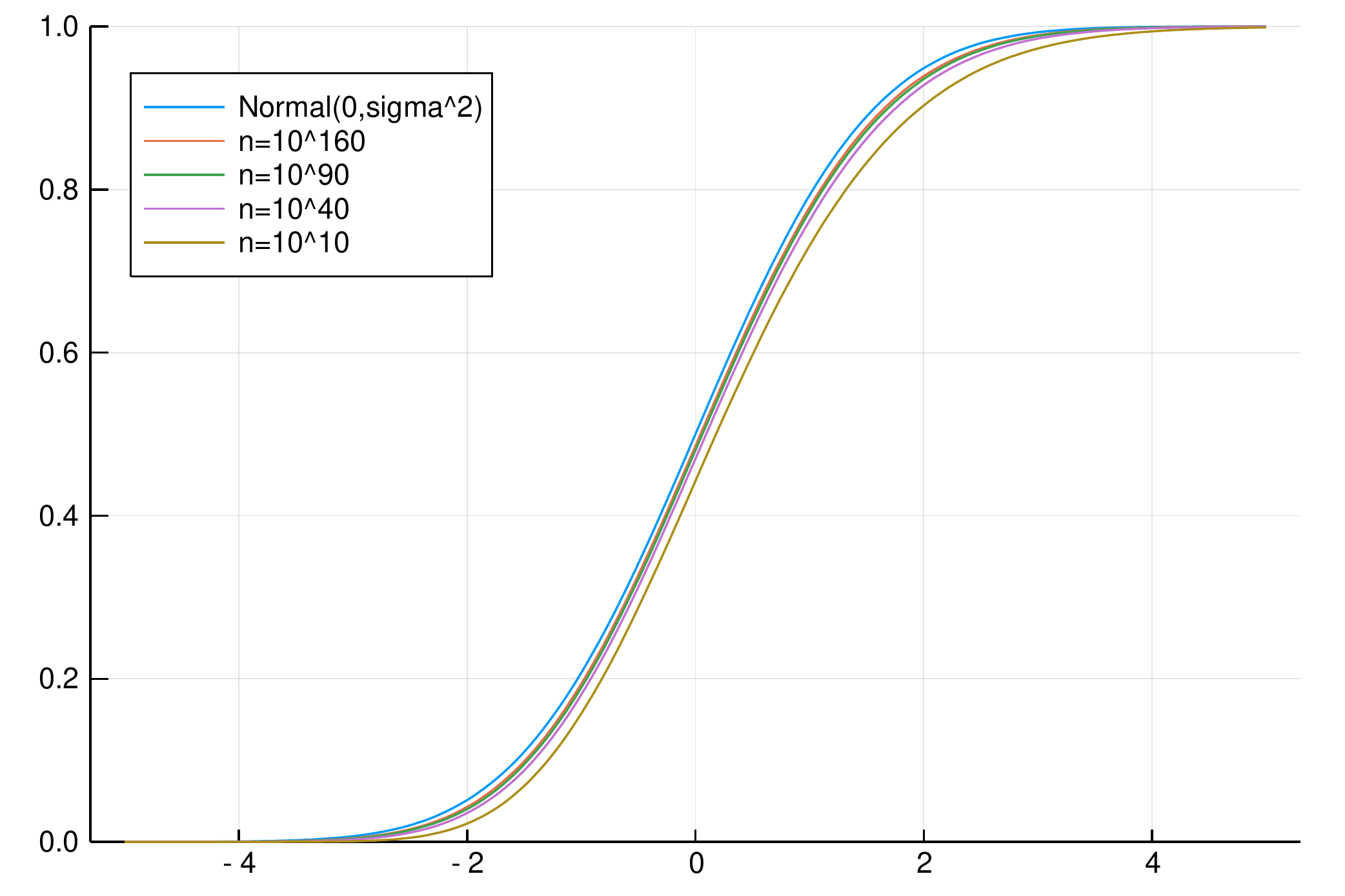}
		\caption{CPP($\lambda=1$) with Pareto($\alpha=4$) steps}
		\label{subfig:alpha195}
	\end{subfigure}
	\label{subfigi}

	\caption{Empirical cumulative distribution functions when the L\'evy subordinator $S$ is a CPP($\lambda=1$) with Pareto($\alpha$) steps}
	\label{fig}
\end{figure}

To focus on testing the convergence in terms of the index of stability $\alpha$ and $n$, we consider the sequence  $m_n :=  n$ and choose as L\'evy subordinator $S$ a compound Poisson process (CPP) with rate $\lambda=1$ and with Pareto($\alpha$) step sizes.
In~\autoref{fig} we show, for $\alpha=2.5$, $\alpha=4$ and for several large values of $n$, the empirical cumulative distribution function resulting from $10^6$ Monte Carlo simulations of the left- and right-hand sides of~\eqref{theo1 lim1}.

To simulate the left-hand side of~\eqref{theo1 lim1}, we sample the last order statistic of  a multivariate random vector $T$ having a LFMO distribution.
This is done using the property that $T$ is equal in distribution to the joint distribution of the times at which the subordinator $S$ up-crosses the collection of iid exponentially distributed ``triggers'', as defined in  equation~\eqref{eq:Ti}; hence, in particular, $T_{n:n}$ is the time at which the L\'evy subordinator upcrosses the maximum of $n$ standard exponential random variables. 
To simulate this maximum of exponentials, for $n$ smaller than $10^{12}$ we use the CDF inverse method, and for $n$ larger than $10^{12}$ we approximate 
it by the distribution of $G + \log n$, where $G$ is a standard Gumbel random variable.
This approximation is accurate enough for these values of $n$ because the Relative Entropy between these two distributions is of order $\OO(1/n)$.

Our reasoning to choose the parameters we test in our experiments in~\autoref{fig} is as follows.
First, we note that the two main ingredientes in the proof of our result are, first, the convergence of the binomial representation of the order statistics of the LFMO distribution, and second, the convergence to a stable distribution of a centered and scaled L\'evy subordinator.
Intuitively then, this suggests that the convergence rate in our result will be greatly influenced by this latter convergence.
Note now that a properly centered and scaled CPP with Pareto($\alpha$) stepsizes, with $\alpha>3$, will converge to a normal random variable at the rate dictated by the Berry-Esseen theorem; hence we conjecture that in this regime the convergence rate in our result will be $\OO\left( (\log n)^{-1/2} \right)$.
This motivates us to choose the values $n=10^{10}$, $10^{40}$, $10^{90}$ and $10^{160}$ for the case $\alpha=4$ in~\autoref{fig} part (b), and indeed we see that the empirical distributions behaves very similar to the limit normal distribution for $n$ larger than $10^{10}$.
On the other hand, the hypotheses of the Berry-Esseen theorem are no longer satisfied when $\alpha \leq 3$ in the case of the centered and scaled CPP with Pareto($\alpha$) stepsizes.
In fact, in~\autoref{fig} part~(a) we see that for the case of $\alpha=2.5$ with $n=10^{10}$, $10^{40}$, $10^{90}$ and $10^{160}$ the empirical distribution behaves similar to the limit one on the right-hand side of the plot, however on the left-hand side the convergence seems to be slower.
Moreover, in the case of a centered and scaled L\'evy subordinator converging to a stable random variable with index of stability $\alpha \in (1,2)$, it is known that convergence of the rescaled L\'evy process is given by a regularly varying function and  there is no lower limit for the speed: any slowly varying function tending to zero can serve as the rate function; see~\cite{de1999exact}.
Therefore, techniques to obtain tight bounds on the speed of convergence will depend on the underlying L\'evy process and the speeds could be very slow. 

Finally, we remark that the L\'evy subordinators we use in our computational experiments satisfy \hyperlink{hypB}{Hypothesis~(B)}, so the limit in the right-hand side of~\eqref{theo1 lim1} is a normal random variable.
Nonetheless, for subordinators satisfying instead \hyperlink{hypA}{Hypothesis~(A$_\alpha$)} one could use, e.g., the stable random variable sampler for Julia~v0.6.4 available at~\cite{White2013}, that uses Notation~1 of~\cite[Section 1.3]{nolan2018}.
We remark, however, that there are multiple parameterizations for stable random variables ---see the discussion in~\cite[Section 1.3]{nolan2018}--- so care is needed to produce samples consistent with the parameterization of~\cite[Section 4.5.1]{whitt2002stochastic}, which is the one we use in this paper.

\section{Proof of the main result}\label{sec:proof}

In this section, we present the proof of our main result,~\autoref{theo1}.
For that purpose, in~\autoref{subsec:elements}, we present the main plan and lemmas used in the proof, and in~\autoref{subsec:proof} we give the actual proof.

\subsection{Elements of the proof}\label{subsec:elements}

The plan of the proof for both parts 1.~and 2.~of \autoref{theo1} is the same and is as follows.

We start by considering the sequence of random variables for which we want to show that converge, i.e., the ones on the left-hand sides of~\eqref{theo1 lim1} and~\eqref{theo1 lim2} for parts 1.~and 2., respectively, and write their tail distribution function conditional on the trajectory of the underlying L\'evy subordinator $S$.
The first step of the plan is to show that we can construct a sequence of deterministic real functions, say $(T_n)_n$, and a sequence of real random variables, say $(\xi_n)_n$, such that the conditional tail probability at hand is almost surely equal to $T_n(\xi_n)$.
Here, the random variables $\xi_n$ are related to a certain ``zoom-out'' centering and scaling of the L\'evy subordinator $S$, and the functions $T_n$ are related to the cumulative distribution function of a binomial random variable.

The second step in the proof plan is to show that the sequence $(\xi_n)_n$ converges in distribution.
We argue this by using the Kolmogorov-Gnedenko results that generalize the central limit theorem in the heavy-tailed setting; see, e.g.,~\cite[Section 4.5]{whitt2002stochastic}.

The third step in the plan is to show that the sequence $(f_n)_n$ converges pointwise almost everywhere.
To argue this, we will need to use the following result, whose proof we defer to the appendix.

\begin{lemma}\label{lemma3}
Let $(p_n)_n$ be a sequence in $(0,1)$ and denote by $\B{n}{p_n}$ a binomial random variable with parameters $(n, p_n)$.
Let $(k_n)_n$ be sequence of nonnegative integers.
\begin{enumerate}
	\item If $n p_n \to 0$ as $n \to +\infty$ then
	$\PP \left( \B{n}{p_n} \leq k_n \right) \to 1$.
	
	\item If $n p_n \to +\infty$ and $p_n \to 0$ as $n \to +\infty$ then
	$\PP \left( \B{n}{p_n} \leq k_n \right) \to 0$ and $\lim_n (k_n - n p_n) / \sqrt{n p_n} = -\infty$ are equivalent.
\end{enumerate}
\end{lemma}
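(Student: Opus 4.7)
The plan is to handle the two parts separately, using a simple first-moment bound for Part~1 and a triangular-array central limit theorem for Part~2. For Part~1, since $k_n \geq 0$, I would bound $\PP(\B{n}{p_n} \leq k_n) \geq \PP(\B{n}{p_n} = 0) = 1 - \PP(\B{n}{p_n} \geq 1)$ and apply Markov's inequality: $\PP(\B{n}{p_n} \geq 1) \leq \EE \B{n}{p_n} = n p_n \to 0$. This directly yields the conclusion.

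For Part~2, the core step is to argue that, under $n p_n \to +\infty$ and $p_n \to 0$, the standardized binomial $Z_n := (\B{n}{p_n} - n p_n)/\sqrt{n p_n}$ converges weakly to a standard normal. I would justify this via the Lindeberg-Feller CLT applied to the triangular array of iid Bernoulli$(p_n)$ summands composing $\B{n}{p_n}$; the Lindeberg condition is immediate, since each summand is bounded in magnitude by $1$ while the total standard deviation $\sqrt{n p_n (1-p_n)}$ tends to infinity, so the Lindeberg truncation indicator vanishes for $n$ large. The factor $1 - p_n \to 1$ then lets me replace $\sqrt{n p_n (1-p_n)}$ by $\sqrt{n p_n}$ without affecting the limit.

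Setting $x_n := (k_n - n p_n)/\sqrt{n p_n}$ so that $\PP(\B{n}{p_n} \leq k_n) = \PP(Z_n \leq x_n)$, the equivalence will follow from standard continuity and subsequence arguments together with continuity of the standard normal CDF $\Phi$. For the $(\Leftarrow)$ direction, if $x_n \to -\infty$, then for each fixed $M > 0$ eventually $x_n \leq -M$, so $\PP(Z_n \leq x_n) \leq \PP(Z_n \leq -M) \to \Phi(-M)$, and letting $M \to +\infty$ gives the claim. For the $(\Rightarrow)$ direction, I would argue by contrapositive: if $x_n$ does not diverge to $-\infty$, I extract a subsequence $(n_j)$ with $x_{n_j} \geq -M$ for some $M > 0$, and obtain $\PP(Z_{n_j} \leq x_{n_j}) \geq \PP(Z_{n_j} \leq -M) \to \Phi(-M) > 0$, contradicting $\PP(\B{n}{p_n} \leq k_n) \to 0$.

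I do not foresee any significant obstacle; the only mildly delicate point is invoking the correct CLT, since $p_n$ varies with $n$ and the classical i.i.d.\ version does not apply directly. The Lindeberg-Feller formulation handles this cleanly, and the remainder of the argument is purely analytic manipulation of weak convergence.
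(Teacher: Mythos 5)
Your proof is correct, and the overall skeleton — establish a triangular-array CLT for the standardized binomial, then argue the equivalence in Part~2 by a monotonicity and subsequence argument — matches the paper's. Where you diverge is in the supporting technical steps, and in both places your route is arguably cleaner. For Part~1, the paper computes $\PP(\B{n}{p_n}=0) = (1-p_n)^n$ exactly and uses a dedicated lemma (their Lemma~\ref{lemma1}, an asymptotic $\left(1+q_n/n\right)^n \sim e^{q_n}$ for $q_n = o(\sqrt{n})$) to show this tends to $1$; your Markov-inequality bound $\PP(\B{n}{p_n}\geq 1)\leq n p_n \to 0$ gives the same conclusion with no auxiliary asymptotics at all. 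For the CLT in Part~2, the paper proves it from scratch by showing convergence of moment generating functions (their Lemma~\ref{lemma2}, which again leans on Lemma~\ref{lemma1}); you instead cite the Lindeberg--Feller theorem for the triangular array of Bernoulli$(p_n)$ summands, verifying the Lindeberg condition via the boundedness of the summands and $s_n = \sqrt{np_n(1-p_n)} \to\infty$, and then pass from $s_n$ to $\sqrt{np_n}$ via $1-p_n\to 1$. Both are valid; the paper's route is self-contained and explicit, while yours is shorter by delegating to a standard theorem. The final equivalence argument is essentially the same in both: your contrapositive with a subsequence bounded below by $-M$ is the streamlined version of the paper's argument with $c = \limsup a_n$ and $a_{n_j}\nearrow c$.
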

Finally, the fourth and last step of the proof plan is to argue that $T_n(\xi_n)$ converges in distribution since $T_n$ and $\xi_n$ converge pointwise and in distribution, respectively.
For this, we will use~\autoref{theo:cmt} below, which is a generalized version of the continuous mapping theorem for weak convergence.

\begin{lemma}[Theorem~4.27 of~\cite{kallenberg2002foundations}]\label{theo:cmt}
Let $\xi$ and $(\xi_n)$ be random variables on a metric space $X$ satisfying $\xi_n \Rightarrow \xi$ as $n \to +\infty$.
Consider another metric space $Y$, and let $\tau$ and $(\tau_n)$ be measurable mappings from $X$ to $Y$.
Assume that for some measurable set $\Xi \subseteq X$, it holds that $\xi \in \Xi$ almost surely and that for all $x \in \Xi$ and all sequences $(x_n)$ in $X$ such that $x_n \to x$ we have that $\tau_n(x_n) \to \tau(x)$. 
Then, it holds that  $\tau_n(\xi_n) \Rightarrow \tau(\xi)$ as $n \to +\infty$.
\end{lemma}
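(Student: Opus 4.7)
The plan is to reduce the weak-convergence hypothesis to almost-sure convergence by invoking the Skorokhod representation theorem, at which point the ``joint continuity'' assumption on $(\tau_n, \tau)$ along sequences converging into $\Xi$ becomes a pointwise statement that can be applied $\omega$ by $\omega$ on a single full-measure event.

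First, I would appeal to the Skorokhod representation theorem (e.g., Theorem~4.30 in Kallenberg) to build a probability space $(\tilde\Omega, \tilde{\mathcal{F}}, \tilde\PP)$ and random elements $\tilde\xi, \tilde\xi_1, \tilde\xi_2, \ldots$ taking values in $X$ such that $\tilde\xi_n$ has the same law as $\xi_n$ for each $n$, $\tilde\xi$ has the same law as $\xi$, and $\tilde\xi_n \to \tilde\xi$ almost surely. This step requires mild regularity on $X$ (separability suffices and is the standing framework in Kallenberg's book), which is what makes the Skorokhod coupling available.

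Next, I would intersect the two full-measure events $\{\tilde\xi \in \Xi\}$ (of probability one since $\xi \in \Xi$ almost surely and $\tilde\xi$ has the same law as $\xi$) and $\{\tilde\xi_n \to \tilde\xi\}$. On their intersection, setting $x := \tilde\xi(\tilde\omega) \in \Xi$ and $x_n := \tilde\xi_n(\tilde\omega)$, the hypothesis of the lemma yields $\tau_n(\tilde\xi_n(\tilde\omega)) \to \tau(\tilde\xi(\tilde\omega))$ in $Y$. Hence $\tau_n(\tilde\xi_n) \to \tau(\tilde\xi)$ almost surely in $Y$, which in turn gives $\tau_n(\tilde\xi_n) \Rightarrow \tau(\tilde\xi)$. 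Finally, measurability of the maps $\tau_n$ and $\tau$ combined with $\tilde\xi_n \stackrel{d}{=} \xi_n$ and $\tilde\xi \stackrel{d}{=} \xi$ guarantees that $\tau_n(\tilde\xi_n) \stackrel{d}{=} \tau_n(\xi_n)$ and $\tau(\tilde\xi) \stackrel{d}{=} \tau(\xi)$, so the weak convergence transfers back to yield $\tau_n(\xi_n) \Rightarrow \tau(\xi)$, as desired.

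The main obstacle is conceptual rather than technical: one has to recognize that the hypothesis ``$\tau_n(x_n) \to \tau(x)$ whenever $x_n \to x$ with $x \in \Xi$'' is precisely the ingredient matching a Skorokhod-produced almost-sure convergence, being strictly weaker than uniform convergence of $\tau_n$ to $\tau$ but strictly stronger than mere continuity of a fixed limit map. A secondary point of care is verifying that the composed objects $\tau_n(\xi_n)$ and $\tau(\xi)$ are bona fide $Y$-valued random elements; this follows from the measurability assumption on $\tau_n$ and $\tau$, and it is what licenses transferring the conclusion between the Skorokhod space and the original probability space.
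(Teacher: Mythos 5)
The paper offers no proof of this lemma --- it is imported verbatim as Theorem~4.27 of Kallenberg's \emph{Foundations of Modern Probability} --- so there is no in-paper argument to compare against. Your Skorokhod-representation proof is correct and is the standard textbook route: couple so that $\tilde\xi_n\to\tilde\xi$ almost surely, intersect with the full-measure event $\{\tilde\xi\in\Xi\}$, apply the sequential-continuity hypothesis $\omega$ by $\omega$ to get $\tau_n(\tilde\xi_n)\to\tau(\tilde\xi)$ almost surely, and transfer laws back. The one caveat, which you flag yourself, is that the Skorokhod coupling needs $X$ separable (or the law of $\xi$ to have separable support), while the lemma is stated for an arbitrary metric space; this is harmless for the paper's application, where $X=\R$. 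For the statement in full generality one instead runs a direct portmanteau argument: for closed $F\subseteq Y$ put $B_m:=\overline{\bigcup_{n\ge m}\tau_n^{-1}(F)}$, note $\PP(\tau_n(\xi_n)\in F)\le\PP(\xi_n\in B_m)$ for $n\ge m$ and $\limsup_n\PP(\xi_n\in B_m)\le\PP(\xi\in B_m)$ since $B_m$ is closed, and then use the hypothesis on $(\tau_n)$ to check $\Xi\cap\bigcap_m B_m\subseteq\tau^{-1}(F)$, which yields $\limsup_n\PP(\tau_n(\xi_n)\in F)\le\PP(\tau(\xi)\in F)$ with no separability assumption. What your approach buys is transparency and a clean conceptual reduction; what the direct argument buys is the exact generality of the quoted statement.
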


\subsection{Proof of \autoref{theo1}}\label{subsec:proof}

We now prove~\autoref{theo1}.
To clarify the exposition, we present the proofs of parts~1.~and~2. \linebreak separately.

\begin{proof}[Proof of \autoref{theo1}~part 1.]
We will actually show that for all $t \in \R$, we have that
\begin{align}\label{theo1 result0}
\PP \left( \left. T_{m_n:n} > \frac{\log n + t \left(\log n\right)^{1 / \alpha}}{\EE S_1} \right| \condS  \right) \Longrightarrow \I{\sigma \Sigma_\infty + t < 0}
\end{align}
as $n \to +\infty$ if $n-m_n = o\left( (\log n)^{1 / \alpha} \right)$, where $\Sigma_\infty$ is a $\text{Stable}_\alpha (1, 1, 0)$ random variable and $\sigma$ is as defined in~\eqref{def:sigma}.
This result immediately implies part~1.~of \autoref{theo1}.
Indeed, since both random variables on the left- and right-hand sides of display~\eqref{theo1 result0} have bounded support, then the convergence also holds when taking expected value; thus, after rearranging terms we obtain that if $n-m_n = o\left( (\log n)^{1 / \alpha} \right)$ then for all $t$ we have as $n \to +\infty$, 
\begin{align}\label{theo1 result}
\PP \left( \frac{T_{m_n:n} - \log n / \EE S_1}{\left(\log n\right)^{1 / \alpha} / \EE S_1} > t \right) \to \PP( -\sigma \Sigma_\infty > t).
\end{align}
We conclude by noting that $-\sigma \Sigma_\infty$ is distributed as a Stable$_\alpha (\sigma, -1, 0)$ random variable under \hyperlink{hypA}{Hypothesis~(A$_\alpha$)} ---see~\cite[Section 4.5.1]{whitt2002stochastic}--- and as a Normal$(0,\sigma^2)$ random variable under \hyperlink{hypB}{Hypothesis~(B)}, and that the limit~\eqref{theo1 result} holding for all $t$ characterizes the limit in distribution~\eqref{theo1 lim1}.

The first part of the proof consists of showing that for all $t \in \R$ and all $n$ sufficiently large such that $\log n + t (\log n)^{1 / \alpha} \geq 0$, we have that
\begin{align}\label{lim part 0}
\PP \left( \left. T_{m_n:n} > \frac{\log n + t \left(\log n\right)^{1 / \alpha}}{\EE S_1} \right| \condS \right) = 1 - f_n (\sigma \Sigma_n+t)
\end{align}
almost surely.
Here, the (deterministic) functions $(f_n)_n$ are defined as
\begin{align}\label{def:fn}
f_n(x) := \begin{cases}
\PP \left( \B{n}{e^{ -x (\log n)^{1 / \alpha}} / n} \leq n-m_n \right) & \text{if } x \geq -\frac{\log n}{(\log n)^{1 / \alpha}} \\
\PP \left( \B{n}{1} \leq n-m_n \right) & \text{otherwise,}
\end{cases}
\end{align}
with $\B{n}{p}$ denoting a binomial random variable with parameters $n$ and $p$.
The sequence of random variables $(\Sigma_n)_n$ is defined as
\begin{align}\label{def:Sigman}
\Sigma_n & := \left( \frac{S_{u_n} -{u_n} \EE S_1}{\sigma \left( u_n \EE S_1 \right)^{1 / \alpha}} \right) \left( 1 + t (\log n)^{-(\alpha-1)/\alpha} \right)^{1 / \alpha},
\end{align}
with $\sigma$ as in~\eqref{def:sigma} and where $u_n = u_n(t)$ is defined as
\begin{align}\label{def:un}
u_n := \frac{\log n + t (\log n)^{1 / \alpha}}{\EE S_1}.
\end{align}
We remark that, by the definition of $u_n$, the condition that $n$ is sufficiently large such that $\log n + t (\log n)^{1 / \alpha} \geq 0$ is equivalent to the condition $u_n \geq 0$.
We also remark that the condition $u_n \geq 0$ implies that the argument $\sigma \Sigma_n+t$ in the term $f_n (\sigma \Sigma_n+t)$ is always in the domain $x \geq -\log n / (\log n)^{1/\alpha}$ in the definition of $f_n$, i.e., $\sigma \Sigma_n+t \geq -\log n / (\log n)^{1/\alpha}$ almost surely; this is direct by using that $S$ has nondecreasing paths with $S_0=0$ and using the definitions~\eqref{def:sigma}, \eqref{def:Sigman} and~\eqref{def:un} of $\sigma$, $\Sigma_n$ and $u_n$, respectively.

Indeed, to prove the characterization~\eqref{lim part 0} of the term
$$\PP \left( \left. T_{m_n:n} > (\log n + t \left(\log n\right)^{1 / \alpha}) / \EE S_1 \right| \condS \right),$$
we first use that $S$ is a L\'evy subordinator with $S_0 = 0$, and that given $\condS$, we know that the times $T_1, \ldots, T_n$ are iid with the event $\{T_k > u\}$ having probability $e^{-S_u}$, i.e., $\PP(T_k>u | \condS) = e^{-S_u}$.
This implies that for all $u \geq 0$ and all $k = 1, \ldots, n$ we have
\begin{align*}
\PP \left( \left. T_{k:n} > u \right| \condS \right) & = \PP \left( \left. \B{n}{e^{-S_u}} > n-k \right| \condS \right).
\end{align*}
In particular, for $k=m_n$ we have
\begin{align}\label{eq:binom}
\PP \left( \left. T_{m_n:n} > u \right| \condS \right) & = 1 - \PP \left( \left. \B{n}{e^{-S_u}} \leq n-m_n \right| \condS \right).
\end{align}
Now, note that 
\begin{align*}
e^{-S_u}
 = \exp \left(-\sigma \left( \frac{S_u -u \EE S_1}{\sigma \left(u \EE S_1 \right)^{1 / \alpha}} \right) (u \EE S_1)^{1 / \alpha}+ \log n - u \EE S_1 \right) / n.
\end{align*}
In particular, plugging-in $u = u_n$, with $u_n$ as defined in~\eqref{def:un} and $n$ sufficiently large such that $u_n \geq 0$, we obtain that
\begin{align*}
\lefteqn{ e^{-S_{u_n}} = \exp \left( -\sigma \left( \frac{S_{u_n} - {u_n} \EE S_1}{\sigma \left( u_n \EE S_1 \right)^{1 / \alpha}} \right) \left( \log n + t (\log n)^{1 / \alpha} \right)^{1 / \alpha} - t (\log n)^{1 / \alpha} \right) / n } \\
& = \exp \left( -\left[ \sigma \left( \frac{S_{u_n} -{u_n} \EE S_1}{\sigma \left(u_n \EE S_1 \right)^{1 / \alpha}} \right) \left( 1 + t (\log n)^{-(\alpha-1)/\alpha} \right)^{1 / \alpha} + t \right] (\log n)^{1 / \alpha} \right) / n \\
& = \exp \left( -\left( \sigma \Sigma_n + t \right) (\log n)^{1 / \alpha} \right) / n,
\end{align*}
where in the last equation, we used the definition~\eqref{def:Sigman} of $\Sigma_n$.
Plugging in this expression for $e^{-S_{u_n}}$ and $u = u_n = (\log n + t \left(\log n\right)^{1 / \alpha}) / \EE S_1$ in equation~\eqref{eq:binom}, we obtain that, almost surely,
\begin{align*}
\lefteqn{\PP \left( \left. T_{m_n:n} > \frac{\log n + t \left(\log n\right)^{1 / \alpha}}{\EE S_1} \right| \condS \right) } \\
& = 1 - \PP \left( \left. \B{n}{e^{ -(\sigma \Sigma_n+t) (\log n)^{1 / \alpha}} / n } \leq n-m_n \right| \condS \right) \\
& = 1 - f_n (\sigma \Sigma_n + t),
\end{align*}
where in the last equality we used the definition~\eqref{def:fn} of $f_n$, and that for all $n$ sufficiently large such that $u_n \geq 0$ we have that $\Sigma_n$ is measurable with respect to the sigma-algebra generated by $\condS$.
This proves equation~\eqref{lim part 0}.

The second part of the proof consists of showing that the sequence $(\Sigma_n : n \geq 1)$, with $\Sigma_n$ as defined in~\eqref{def:Sigman}, satisfies the convergence in distribution
\begin{align}\label{lim Sigma_n}
\Sigma_n \Longrightarrow \Sigma_\infty
\end{align}
as $n \to +\infty$, where $\Sigma_\infty$ is a $\text{Stable}_\alpha (1, 1, 0)$ random variable under \hyperlink{hypA}{Hypothesis~(A$_\alpha$)} and a $\text{Normal}(0,1)$ random variable under \hyperlink{hypB}{Hypothesis~(B)}.
Indeed, first note that the term $\left( 1 + t (\log n)^{-(\alpha-1)/\alpha} \right)^{1 / \alpha}$ in the definition of $\Sigma_n$ converges to $1$ as $n \to +\infty$ since $\alpha$ in particular satisfies $\alpha>1$.
On the other hand, using the definition~\eqref{def:sigma} of $\sigma$, we have that as $u \to +\infty$, the random variable $(S_u -u \EE S_1)/(\sigma \left( u \EE S_1 \right)^{1 / \alpha})$ converges in distribution to a $\text{Stable}_\alpha (1, 1, 0)$ random variable under \hyperlink{hypA}{Hypothesis~(A$_\alpha$)} and a $\text{Normal}(0,1)$ random variable under \hyperlink{hypB}{Hypothesis~(B)}; this holds by~\cite[Theorem~4.5.2]{whitt2002stochastic} in the case of \hyperlink{hypA}{Hypothesis~(A$_\alpha$)} and by the central limit theorem in the case of \hyperlink{hypB}{Hypothesis~(B)}.
We conclude the convergence~\eqref{lim Sigma_n} by noting that $u_n \to +\infty$ as $n \to +\infty$.

The third part of the proof consists of showing that the functions $(f_n)_n$ defined in~\eqref{def:fn} satisfy
\begin{align}\label{propfn}
\lim_n f_n(x) = \I{x>0} \text{ for all } x \neq 0 \quad \text{ if and only if } \quad n-m_n = o\left( (\log n)^{1 / \alpha} \right).
\end{align}
Indeed, first note that $\lim_n f_n(x) = 1$ holds for all $x>0$.
This comes from applying part~1.~of \autoref{lemma3} with $p_n = q_n(x)$ defined as
\begin{align}\label{def:pn}
q_n(x) := e^{ -x (\log n)^{1 / \alpha}} / n,
\end{align}
since $n q_n (x) \to 0$ as $n \to +\infty$ for all $x>0$.
Next, we argue that  $\lim_n f_n(x) = 0$ holds for all $x<0$ if and only if $n-m_n = o\left( (\log n)^{1 / \alpha} \right)$.
To do this, we first apply part~2.~of \autoref{lemma3} with $p_n = q_n(x)$ as in~\eqref{def:pn}, since for all $x<0$, we have that $n q_n(x) \to +\infty$ and $q_n(x) \to 0$ since $\alpha>1$; in this way, we obtain that $\lim_n f_n(x) = 0$ holds for all $x<0$ if and only if
\begin{align}\label{statement1}
\lim_n (n-m_n - n q_n(x)) / \sqrt{n q_n(x)} = -\infty \quad \text{for all } x<0.
\end{align}
We now argue that the condition~\eqref{statement1} is equivalent to $n-m_n = o \left( (\log n)^{1 / \alpha} \right)$ as $n \to +\infty$.
For this, recall that $x<0$ and note that rewriting
\begin{align}
\frac{n-m_n - n q_n(x) }{ \sqrt{n q_n(x)}}
& = \frac{n-m_n - e^{ -x (\log n)^{1 / \alpha}}}{e^{ -\frac{x}{2} (\log n)^{1 / \alpha}}} \nonumber \\
& = e^{ \frac{|x|}{2} (\log n)^{1 / \alpha}} \left( e^{(\log n)^{1 / \alpha} \left( \frac{\log (n-m_n)}{(\log n)^{1 / \alpha}} - |x| \right)} - 1 \right) \label{theo1 cond}
\end{align}
we can see that if $(n-m_n) / (\log n)^{1 / \alpha} \to 0$ then~\eqref{statement1} holds; and on the other hand, if $\limsup_n \log (n-m_n) / (\log n)^{1 / \alpha} = \epsilon>0$, then it is sufficient to plug in $x = -\epsilon / 2$ in~\eqref{theo1 cond} to obtain that $\limsup_n (n-m_n - n q_n(x)) / \sqrt{n q_n(x)} = +\infty$.
This proves equivalence~\eqref{propfn}.

The fourth part of the proof consists of showing that
\begin{align}\label{lim cmt}
f_n \left( \sigma \Sigma_n+t \right) \Rightarrow \I{\sigma \Sigma_\infty + t>0}
\end{align}
holds as $n \to +\infty$ when $n-m_n = o \left( (\log n)^{1 / \alpha} \right)$.
Intuitively, this should be true because from the second part of the proof we have $\sigma \Sigma_n+t \Rightarrow \sigma \Sigma_\infty + t$ as $n \to +\infty$, and from the third part of the proof we have that $f_n(x) \to \I{x>0}$ holds for all $x \neq 0$ if $n-m_n = o \left( (\log n)^{1 / \alpha} \right)$.
To formalize this intuition, we apply~\autoref{theo:cmt}, which is a generalized version of the continuous mapping theorem for weak convergence.
Indeed, we can apply this result because, first, $f_n$ is continuous for all $n$; and second, assuming $n-m_n = o \left( (\log n)^{1 / \alpha} \right)$ and defining $\Xi := \R \setminus \{0\}$, we have $\PP (\sigma \Sigma_\infty+t \in \Xi) = 1$ and $\lim_n f_n (x) = \I{x>0}$ for all $x \in \Xi$.
This gives the limit in distribution~\eqref{lim cmt} under the aforementioned growth condition on $n-m_n$.

Finally, using the limit~\eqref{lim cmt} in equation~\eqref{lim part 0}, we obtain that~\eqref{theo1 result0} holds if $n-m_n = o\left( (\log n)^{1 / \alpha} \right)$, which is what we wanted to prove, as we argued in the beginning of this proof.
This concludes the proof of part~1.~of \autoref{theo1}.
\end{proof}

We now give the proof of part~2.~of~\autoref{theo1}.
We give a concise version of the proof, as the arguments parallel the main ideas of the proof of part~1.

\begin{proof}[Proof of \autoref{theo1}~part 2.]
We will actually show that for all $t \in \R$, we have that
\begin{align}\label{theo1 result0 p2}
\PP \left( \left. T_{m_n:n} > t \left(\log n\right)^{1 / \alpha} \right| \condS  \right) \Longrightarrow \I{\sigma \Pi_\infty t^{1/\alpha} < 1}
\end{align}
as $n \to +\infty$ if $n-m_n = o\left( n^\rho \right)$ for all $\rho \in (0,1)$, where $\Pi_\infty$ is a $\text{Stable}_\alpha (1, 1, 0)$ random variable and $\sigma$ is defined as
\begin{align}\label{def:sigma p2}
\sigma := (A / C_\alpha)^{1 / \alpha}.
\end{align}
The result~\eqref{theo1 result0 p2} immediately implies part~2.~of \autoref{theo1}.
Indeed, both random variables on the left- and right-hand sides of display~\eqref{theo1 result0 p2} have bounded support, so the convergence also holds when taking the expected value, thus obtaining after rearranging terms
\begin{align}\label{theo1 result p2}
\PP \left( \frac{T_{m_n:n}}{\left(\log n\right)^{1 / \alpha}} > t \right) \rightarrow \PP\left( \frac{1}{\left( \sigma \Pi_\infty \right)^\alpha} > t \right)
\end{align}
as $n \to +\infty$.
We conclude by noting that $\sigma \Pi_\infty$ has a Stable$_\alpha (\sigma, 1, 0)$ distribution ---see~\cite[Section 4.5.1]{whitt2002stochastic}--- and that the limit~\eqref{theo1 result p2} holding for all $t \in \R$ characterizes the limit in distribution~\eqref{theo1 lim2}, which is what we want to prove.

The first part of the proof consists of showing that for all $t \in \R$ and all $n \geq 1$, we have that 
\begin{align}\label{lim part 0 p2}
\PP \left( \left. T_{m_n:n} > t \left(\log n\right)^{1 / \alpha} \right| \condS \right) = 1 - g_n (\sigma \Pi_n t^{1/\alpha}),
\end{align}
almost surely.
Here, the (deterministic) functions $(g_n)_n$ are defined as
\begin{align}\label{def:fn p2}
g_n(x) := \begin{cases}
\PP \left( \B{n}{1 / n^x} \leq n-m_n \right)  & \text{if } x \geq 0 \\
\PP \left( \B{n}{1} \leq n-m_n \right) & \text{otherwise,}
\end{cases}
\end{align}
and the sequence of random variables $(\Pi_n)_n$ is defined as
\begin{align}
\Pi_n := \frac{S_{v_n}}{\sigma v_n^{1 / \alpha}},
\end{align}
with $\sigma$ as defined in~\eqref{def:sigma p2} and
\begin{align}
v_n = v_n(t):= (\log n)^\alpha t.
\end{align}
Indeed, substituting $u:=v_n$ into equation~\eqref{eq:binom} we obtain
\begin{align*}
\PP \left( \left. T_{m_n:n} > (\log n)^\alpha t \right| \condS \right) & = 1 - \PP \left( \left. \B{n}{e^{-S_{v_n}}} \leq n-m_n \right| \condS \right),
\end{align*}
and rewriting $e^{-S_{v_n}}$ as
\begin{align*}
e^{-S_{v_n}}
= \exp \left( -\sigma \frac{S_{v_n}}{\sigma v_n^{1 / \alpha}} t^{1 / \alpha} \log n \right)
= \exp \left( -\sigma \Pi_n t^{1 / \alpha} \log n \right)
= 1 / n^{\sigma \Pi_n t^{1 / \alpha} }
\end{align*}
we obtain the characterization~\eqref{lim part 0 p2} of $\PP \left( \left. T_{m_n:n} > t \left(\log n\right)^{1 / \alpha} \right| \condS \right)$.

The second part of the proof is to note that by~\cite[Theorem~4.5.2]{whitt2002stochastic}, the random variable $\Pi_n$ converges in distribution to a $\text{Stable}_\alpha (1, 1, 0)$ random variable, say $\Pi_\infty$.

The third part of the proof consists of showing that
\begin{align*}
\lim_n g_n(x) = \I{x>1} \text{ for all } x \in \R_+ \setminus \{0,1\}
\end{align*}
is equivalent to
\begin{align*}
n-m_n = o \left( n^{\rho} \right) \text{ for all } \rho \in (0,1).
\end{align*}
Indeed, using $p_n:= 1/n^x$ in part~1.~of \autoref{lemma3}, we obtain that $\lim_n g_n(x) = 1$ for all $x>1$.
Additionally, applying part~2.~of \autoref{lemma3} with $p_n$ as before, we obtain that $\lim_n g_n(x) = 0$ holds for all $x \in (0,1)$ if and only if $\lim_n (n-m_n-n/n^x) / \sqrt{n/n^x} = -\infty$ holds for all $x \in (0,1)$, which in turn is equivalent to $n-m_n = o \left( n^{\rho} \right)$ for all $\rho \in (0,1)$.
The latter equivalence is checked by noting that
\begin{align*}
\frac{n-m_n-n/n^x}{\sqrt{n/n^x}} &= n^{\frac{1-x}{2}} \left( \frac{n-m_n}{n^{1-x}} - 1 \right),
\end{align*}
so if there exists $\rho \in (0,1)$ such that $\limsup_n (n-m_n)/n^\rho > 0$, then taking $x^* := 1-\rho / 2$, we obtain that $x^* \in (0,1)$ and that $\limsup_n (n-m_n-n/n^{x^*}) / \sqrt{n/n^{x^*}} = +\infty.$

The fourth part of the proof consists of showing that if $n-m_n = o \left( n^{\rho} \right)$ for all $\rho \in (0,1)$, then
\begin{align}\label{lim cmt p2}
g_n \left( \sigma t^{1/\alpha} \Pi_n \right) \Rightarrow \I{\sigma t^{1/\alpha}\Pi_\infty > 1}
\end{align}
as $n \to +\infty$.
This comes from applying~\autoref{theo:cmt} with $\Xi := \R_+ \setminus \{ 0,1 \}$, since from the second part of the proof we have that $\sigma \Pi_n t^{1/\alpha} \Rightarrow \sigma \Pi_\infty t^{1/\alpha}$, with $\PP(\sigma \Pi_\infty t^{1/\alpha} \in \Xi) = 1$; and from the third part of the proof we have that $\lim_n g_n(x) = \I{x>1}$ for all $x \in \Xi$, under the aforementioned growth condition on $n-m_n$.

Finally, using the limit~\eqref{lim cmt p2} in equation~\eqref{lim part 0 p2}, we obtain that~\eqref{theo1 result0 p2} holds if $n-m_n = o\left( n^\rho \right)$ for all $\rho \in (0,1)$, which is what we wanted to prove.
This concludes the proof of part~2.~of \autoref{theo1}.
\end{proof}

\paragraph{Acknowledgements}
Javiera Barrera acknowledges the financial support of FONDECYT grant 1161064 and of Programa Iniciativa Cient\'ifica Milenio NC120062.
Guido Lagos acknowledges the financial support of FONDECYT grant 3180767, Programa Iniciativa Cient\'ifica Milenio NC120062, and the Center for Mathematical Modeling of Universidad de Chile through their grants Proyecto Basal PFB-03 and PIA Fellowship AFB170001.

\begin{appendices}\label{sec:append}

\begin{lemma}\label{lemma1}
Let $(q_n)_n \subset \R$ be a sequence such that $q_n = o(\sqrt n)$ as $n \to +\infty$.
Then, $\left(1+q_n/n\right)^n \sim e^{q_n}$ as $n \to +\infty$, i.e., $\lim_{n \to +\infty} \left(1 + q_n/n\right)^n / e^{q_n} = 1$.
\end{lemma}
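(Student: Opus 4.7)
The plan is to take logarithms and use a Taylor expansion of $\log(1+x)$ around $x=0$, with the $o(\sqrt{n})$ hypothesis being exactly what controls the quadratic remainder term.

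First, since $q_n = o(\sqrt n)$ implies $q_n/n \to 0$, there exists $n_0$ such that $|q_n|/n < 1/2$ for all $n \geq n_0$; in particular $1+q_n/n > 0$ and we may take logarithms. It then suffices to show that
\begin{equation*}
R_n := n \log\!\left(1 + \frac{q_n}{n}\right) - q_n \longrightarrow 0 \quad \text{as } n \to +\infty,
\end{equation*}
since then $(1+q_n/n)^n/e^{q_n} = e^{R_n} \to 1$.

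Second, I would use the standard estimate $|\log(1+x) - x + x^2/2| \leq C |x|^3$ valid for all $|x| \leq 1/2$ and some absolute constant $C>0$ (obtained from Taylor's theorem with integral remainder, or equivalently from the alternating series bound on the Mercator expansion $\log(1+x) = \sum_{k \geq 1} (-1)^{k+1} x^k/k$). Plugging $x = q_n/n$ in and multiplying by $n$ gives
\begin{equation*}
\left| R_n + \frac{q_n^2}{2n} \right| \leq n \cdot C \frac{|q_n|^3}{n^3} = C \, \frac{|q_n|^3}{n^2}.
\end{equation*}

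Third, the hypothesis $q_n = o(\sqrt n)$ gives $q_n^2/n \to 0$, and moreover $|q_n|^3/n^2 = (q_n^2/n)\cdot(|q_n|/n) \to 0$ since both factors tend to $0$. Combining the two displayed bounds yields $R_n \to 0$, which concludes the proof.

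The proof is essentially routine; the only subtlety is sign handling, i.e.\ checking that $1+q_n/n>0$ eventually so logarithms are well-defined, and verifying that the cubic remainder bound holds uniformly for $|q_n/n| \leq 1/2$. The reason the hypothesis is phrased as $o(\sqrt n)$ rather than merely $o(n)$ is precisely to kill the quadratic term $q_n^2/(2n)$; a weaker hypothesis such as $q_n = \Theta(\sqrt n)$ would make $q_n^2/(2n)$ bounded away from $0$ and the asymptotic equivalence would fail.
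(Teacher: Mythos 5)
Your proof is correct and follows essentially the same route as the paper: take logarithms, expand $\log(1+x)$ around $0$, and use $q_n = o(\sqrt{n})$ to make the leading remainder term $q_n^2/(2n)$ vanish. Your explicit cubic Taylor remainder bound is a cleaner piece of bookkeeping than the paper's full Mercator series display (which, incidentally, contains a typo writing $j!$ where $1/j$ should appear, making the leading coefficient $-2$ instead of the correct $-1/2$), but the underlying idea and the role of the $o(\sqrt n)$ hypothesis are identical.
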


\begin{proof}
For all $n$ sufficiently large, we can write
\begin{align*}
\lefteqn{ \frac{\left(1+q_n/n\right)^n}{ e^{q_n}}
= \exp \left( n \log(1+q_n/n) -q_n \right)} \\
& = \exp \left( \sum_{j \geq 2} (-1)^{j+1} j! \left( q_n / \sqrt n \right)^j \frac{1}{n^{\frac{j}{2}-1}} \right) \\
& = \exp \left( -2 \left( q_n / \sqrt n \right)^2 + o \left( \left(q_n / \sqrt n\right)^2 \right) \right),
\end{align*}
where we used the series expansion of the $\log$ function around 1 to write the term $\log(1+q_n/n)$ as a series and where in the last equality, we used that $q_n/n \to 0$ as $n \to +\infty$ because $q_n = o(\sqrt n)$.
We conclude by noting that the last term of the equation goes to one as $n \to +\infty$ since $q_n = o(\sqrt n)$.
\end{proof}

\begin{lemma}\label{lemma2}
Let $(p_n)_n$ be a sequence in $(0,1)$ such that $p_n \to 0$ and $n p_n \to +\infty$ as $n \to +\infty$.
Denoting by $\B{n}{p_n}$ a binomial random variable with parameters $(n, p_n)$, and by $N$ a standard normal random variable, it holds that
\begin{align}\label{lemma2 lim}
\frac{\B{n}{p_n} - n p_n}{\sqrt{n p_n}} \Longrightarrow N
\end{align}
as $n \to +\infty$.
\end{lemma}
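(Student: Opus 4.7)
The plan is to apply the method of characteristic functions together with L\'evy's continuity theorem. Since the characteristic function of $\B{n}{p_n}$ is $(1 - p_n + p_n e^{it})^n$, the characteristic function of the centered and scaled variable $(\B{n}{p_n} - n p_n)/\sqrt{n p_n}$ is
\begin{equation*}
\phi_n(t) = e^{-it\sqrt{n p_n}}\,\bigl(1 - p_n + p_n e^{it/\sqrt{n p_n}}\bigr)^n,
\end{equation*}
and it suffices to prove that $\phi_n(t) \to e^{-t^2/2}$ for every fixed $t \in \R$.

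The key calculation would proceed as follows. Since $n p_n \to +\infty$, the argument $t/\sqrt{n p_n}$ tends to zero, so I would Taylor-expand $e^{it/\sqrt{n p_n}}$ to second order and rewrite the term inside the $n$-th power in the form $1 + q_n/n$ with
\begin{equation*}
q_n \,=\, it\sqrt{n p_n} - \frac{t^2}{2} + O\bigl((n p_n)^{-1/2}\bigr),
\end{equation*}
the remainder coming from the cubic term $p_n \cdot O((n p_n)^{-3/2})$ multiplied by $n$. Using the hypothesis $p_n \to 0$ one then checks that $q_n/\sqrt n = it\sqrt{p_n} + o(1) \to 0$, i.e., $q_n = o(\sqrt n)$, which is exactly the assumption of \autoref{lemma1}. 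Applying that lemma yields $(1 + q_n/n)^n \sim e^{q_n}$, and combining with the prefactor gives
\begin{equation*}
\phi_n(t) \,=\, e^{-it\sqrt{n p_n}} \bigl(1 + q_n/n\bigr)^n \,\longrightarrow\, e^{-t^2/2},
\end{equation*}
since the leading imaginary part of $q_n$ cancels exactly with $e^{-it\sqrt{n p_n}}$. L\'evy's continuity theorem then converts this pointwise limit of characteristic functions into the desired convergence in distribution~\eqref{lemma2 lim}.

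The main obstacle is a purely technical one: the sequence $q_n$ defined above is complex-valued with modulus of order $\sqrt{n p_n}$, whereas \autoref{lemma1} is stated for real sequences. Its proof, however, uses only the power-series expansion of $\log(1+z)$ around $z=0$, which is valid for any complex $z$ with $|z|<1$; since $|q_n/n|$ is bounded by a constant times $\sqrt{p_n/n}$ plus lower order terms and hence tends to zero, the tail estimate in that proof goes through unchanged, and the conclusion $(1+q_n/n)^n \sim e^{q_n}$ holds for complex $q_n=o(\sqrt n)$ as well. Once this observation is made explicit, the remaining steps are routine.
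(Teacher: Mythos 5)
Your argument is correct and structurally parallels the paper's, with one notable difference: you work with the characteristic function $\phi_n(t) = e^{-it\sqrt{np_n}}\bigl(1-p_n+p_n e^{it/\sqrt{np_n}}\bigr)^n$ and L\'evy's continuity theorem, whereas the paper works with the moment generating function $e^{-t\sqrt{np_n}}\bigl(1-p_n+p_n e^{t/\sqrt{np_n}}\bigr)^n$. In both cases the decomposition and the role of \autoref{lemma1} are identical, with $q_n := n p_n\bigl(e^{\theta/\sqrt{np_n}}-1\bigr)$ (for $\theta = it$ or $\theta = t$) satisfying $q_n/\sqrt n \to 0$ because $p_n\to 0$. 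The trade-off is exactly the one you identify: the MGF route uses \autoref{lemma1} as stated for real sequences, at the cost of invoking a Laplace-transform continuity theorem (which is fine here since binomial MGFs are finite everywhere), while your CF route uses the more universally applicable L\'evy continuity theorem but requires extending \autoref{lemma1} to complex $q_n$. Your justification of that extension is correct and essentially complete: the identity $\bigl(1+q_n/n\bigr)^n/e^{q_n} = \exp\bigl(n\log(1+q_n/n)-q_n\bigr)$ with the principal branch of $\log$ is legitimate once $|q_n/n|<1$, which holds for large $n$ since $q_n = o(\sqrt n)$ forces $q_n/n\to 0$, and the series bound $\bigl|n\log(1+q_n/n)-q_n\bigr|\le \tfrac{|q_n|^2}{2n}\cdot\tfrac{1}{1-|q_n|/n}\to 0$ requires only $|q_n|=o(\sqrt n)$, not that $q_n$ be real. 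So the only thing missing is to make that last bound explicit; otherwise the proposal stands.
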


\begin{proof}
We will prove that the moment generating function of the random variable in the left of~\eqref{lemma2 lim} converges to the moment generating function of a standard normal random variable.
Indeed,
\begin{align*}
\lefteqn{
\EE \left[ \exp \left( t \frac{\B{n}{p_n} - n p_n}{\sqrt{n p_n}} \right) \right]
= \left( 1-p_n + p_n e^{t / \sqrt{n p_n}} \right)^n e^{-t \sqrt{n p_n}}
} \\
& = \left( 1 + \frac{n p_n \left(e^{t / \sqrt{n p_n}}-1\right)}{n} \right)^n e^{-t \sqrt{n p_n}} \\
& = \left[ \left. \left( 1 + \frac{n p_n \left(e^{t / \sqrt{n p_n}}-1\right)}{n} \right)^n \middle/
\exp \left( n p_n \left(e^{t / \sqrt{n p_n}}-1\right) \right) \right. \right] \\
& \qquad\qquad\qquad\qquad\qquad\qquad \cdot \exp \left( n p_n \left(e^{t / \sqrt{n p_n}}-1\right) - t \sqrt{n p_n} \right).
\end{align*}
Now, by~\autoref{lemma1} we have that the term in the square brackets converges to one, since
$$
\frac{n p_n \left(e^{t / \sqrt{n p_n}}-1\right)}{\sqrt n}
= \frac{n p_n}{\sqrt n} \sum_{j \geq 1} \frac{t^j}{j!} \frac{1}{(n p_n)^{j/2}}
= t \sqrt{p_n} + \frac{1}{\sqrt n} \sum_{j \geq 0} \frac{t^{j+2}}{(j+2)!} \frac{1}{(n p_n)^{j/2}},
$$
which converges to zero as $n \to +\infty$.
Additionally, using the series expansion of the exponential function we obtain that
\begin{align*}
\lefteqn{\exp \left( n p_n \left(e^{t / \sqrt{n p_n}}-1\right) - t \sqrt{n p_n} \right)} \\
&& = \exp \left( \frac{t^2}{2} + \frac{1}{\sqrt{n p_n}} \sum_{j \geq 0} \frac{t^{j+3}}{(j+3)!} \frac{1}{(n p_n)^{j/2}} \right),
\end{align*}
which converges to $\exp(t^2/2)$ since $n p_n \to +\infty$.
This concludes the proof.
\end{proof}

We are now able to prove~\autoref{lemma3}.

\begin{proof}[Proof of~\autoref{lemma3}]
For part~1.~it is enough to show that $\B{n}{p_n} \Rightarrow 0$.
This holds since
$\PP ( \B{n}{p_n} = 0) = \left( 1-p_n \right)^n = \left( 1 - n p_n / n \right)^n \sim e^{-n p_n} \sim 1$
as $n \to +\infty$, where the asymptotic equalities hold due to~\autoref{lemma1} and $n p_n \to 0$.

We now prove part~2.
For that purpose define first $F_n (t) := \PP( \B{n}{p_n} \leq n p_n + \sqrt{n p_n} t)$ and $a_n := (k_n - n p_n) / \sqrt{n p_n}$.
Note that part~2.~of \autoref{lemma3} is equivalent to
\begin{align}\label{lemma3:equiv}
\limsup F_n(a_n) = 0 \qquad \text{if and only if} \qquad \limsup a_n = -\infty.
\end{align}
To prove the reverse implication of~\eqref{lemma3:equiv} note that for all $b$ and all sufficiently large $n$ we have $F_n (a_n) \leq F_n(b)$, since $a_n \leq b$ for all sufficiently large $n$.
Taking then $\limsup$ we obtain by~\autoref{lemma2} that $\limsup F_n(a_n) \leq \Phi(b)$, where $\Phi$ is the cumulative distribution function of the standard normal distribution.
We conclude that $\limsup F_n(a_n) = 0$ by making $b \searrow -\infty$.
Now, to prove the direct implication of~\eqref{lemma3:equiv}, let $\limsup F_n(a_n) = 0$ and assume ad absurdum that $\limsup a_n = c > -\infty$.
Consider then a subsequence $(a_{n_j})_j$ such that $a_{n_j} \nearrow c $ as $j \to +\infty$.
Then for all $\epsilon > 0$ and all sufficiently large $j$ it holds that
\begin{align}\label{lemma3 ineq1}
F_{n_j} (c-\epsilon) \leq F_{n_j} (a_{n_j}) \leq F_{n_j} (c),
\end{align}
since $a_{n_j} \in [c-\epsilon, c]$ for all sufficiently large $j$.
Making $j \to +\infty$ in~\eqref{lemma3 ineq1} we obtain that
\begin{align}\label{lemma3 ineq2}
\Phi (c-\epsilon) \leq \liminf F_{n_j} (a_{n_j}) \leq \limsup F_{n_j} (a_{n_j}) \leq \Phi (c),
\end{align}
where we used that by~\autoref{lemma3} the sequence $(F_{n_j})_j$ converges pointwise to $\Phi$.
It follows that by taking $\epsilon \searrow 0$ in equation~\eqref{lemma3 ineq2} we obtain that $\lim_j F_{n_j} (a_{n_j}) = \Phi(c)$.
Thus, in particular, $\Phi(c)$ is an accumulation point of $(F_n(a_n))_n$, which in turn implies $\Phi(c) \leq \limsup F_n(a_n)$.
But $\Phi(c)>0$ because $c>-\infty$, and we had assumed $\limsup F_n(a_n) = 0$, which is a contradiction.
This shows that necessarily $\limsup a_n = -\infty$.
\end{proof}

\end{appendices}



\bibliographystyle{plain}
\bibliography{bibliography}   


\end{document}